\newtheorem{thm}{Theorem}[section]
\newtheorem{prop}[thm]{Proposition}
\newtheorem{cor}[thm]{Corollary}
\theoremstyle{definition}
\newtheorem{defin}[thm]{Definition}
\newtheorem{rem}[thm]{Remark}
\numberwithin{equation}{section}
\def\hh{{\mathcal H}}
\def\ll{{\mathcal L}}
\def\ss{{\mathcal S}}
\def\FF{\mathbb{F}}
\def\HH{\mathbb{H}}
\def\NN{\mathbb{N}}
\def\QQ{\mathbb{Q}}
\def\RR{\mathbb{R}}
\def\ZZ{\mathbb{Z}}
\def\be{{\mathbf e}}
\def\bu{{\mathbf u}}
\def\bv{{\mathbf v}}
\def\Norm{\mathop{\rm N}\nolimits}
\def\det{\mathop{\rm det}\nolimits}
\def\gcd{\mathop{\rm gcd}\nolimits}
\def\sgn{\mathop{\rm sgn}\nolimits}
\def\implies{{\>\Rightarrow\>}}
\begin{document}



\baselineskip=1.2\baselineskip


\title[Arithmetic and Geometry of Lipschitz Integers]{Some Connections
  Between The Arithmetic\\[3mm] and The Geometry of Lipschitz
  Integers}

\author[A. Machiavelo]{Ant\'onio Machiavelo} \address{Departamento de
  Matem\'atica\\ Faculdade de Ci\^encias da
  Universidade do Porto\\
  Rua do Campo Alegre, 4169-007 Porto \\ Portugal}
\email{ajmachia@fc.up.pt}

\author[L. Ro\c cadas]{Lu\'is Ro\c cadas}
\address{Centro de Matem\'atica, Universidade do Minho -- Polo CMAT-UTAD\\
  Departamento de Matem\'atica da Universidade de Tr\'as-os-Montes e
  Alto Douro\\ Quinta de Prados, 5001-801 Vila Real \\ Portugal}
\email{rocadas@utad.pt}

\date{\today}


\begin{abstract}
  Some relationships between the arithmetic and the geometry of
  Lipschitz and Hurwitz integers are presented. In particular, it is
  shown that the (ternary) vector product of a Lipschitz integer
  $\alpha$ with two other Lipschitz integers, both orthogonal to
  $\alpha$, is a left and also a right multiple of $\alpha$, and that
  the vector product of two left multiples of $\alpha$ with any other
  Lipschitz integer is still a left multiple of $\alpha$. We also
  provide new arithmetical proofs for some old results of Gordon Pall,
  and raise a geometric problem on the location of some integral
  quaternions that is related to the factorization of some integers.
\end{abstract}

\subjclass[2010]{Primary 11R52; Secondary 11A51}

\keywords{Hurwitz integers, Lipschitz integers, ternary vector
  product, factorization}

\maketitle

\section{Introduction}

The arithmetical properties of integral quaternions have been studied
since Lipschitz used them in 1886, in a paper on the real automorphs
of the form corresponding to the sum of three squares \cite{Lip}. Ten
years later, Hurwitz showed \cite{Hur96} that the integral quaternions
are contained in a slightly bigger ring that is arithmetically more
interesting, the ring of the now called Hurwitz integers, and expanded
their study in the 1919 monograph \cite{Hur19}.  In 1940, Gordon Pall
published an interesting paper on quaternion arithmetic \cite{Pal},
some results of which motivated the present work. The arithmetic of
quaternions continues to be investigated in more recent papers, namely
\cite{CoanPerng,abouzaid,CK15,forsyth}, and recently provided the
tools to solve an open conjecture on a refinement of Lagrange's four
squares theorem \cite{mac1,mac2}.

After describing, in the next section, the genesis behind the research
that led to the results contained in this paper, in section
\ref{sec:quat-lipsch-hurw} we summarize some of the basic facts on
quaternions, we recall the unique factorization theorem for Hurwitz
integers, and a result of Gordon Pall that offers some additional
information on factorizations, for which we provide a new, entirely
arithmetic proof. In section \ref{sec:orth-arithm-1} we collect some
results on relations between some arithmetical properties of integral
quaternions and orthogonality (in $\RR^4$). In particular, we prove a
simple result, Theorem \ref{thm:igama}, that yields some, albeit
remote and tantalizing, hope of an integer factorization method that
uses quaternions.  Finally, in section \ref{sec:vector-product-hh}, we
prove some divisibility results having to do with the (triple) vector
product. Namely, we show that the vector product of an integral
quaternion with two other that are orthogonal to it is both a left and
a right multiple of that quaternion, and that the vector product of
two left (resp.~right) multiples of an integral quaternion $\alpha$
with any other integral quaternion is also a left (right) multiple of
$\alpha$. We end the paper with a question naturally raised by this
last result.

\section{Motivation}

Fr\'enicle de Bessy seems to have been the first to notice that one
can obtain a factorization of an integer $n$ from two different
decompositions of $n$ as a sum of two squares (\cite{Dic}, vol.~I,
cap.~XIV, p.~360). This amounts to the fact that a decomposition
$n=a^2+b^2$ gives a factorization $n=(a+bi)(a-bi)$ in $\ZZ[i]$, and if
one has another decomposition $n=c^2+d^2$, then, using the Euclidean
algorithm in $\ZZ[i]$, one can compute the greatest common divisor of
$a+bi$ and $c+di$, whose norm yields a non-trivial factor of $n$.

As Bachet de M\'eziriac conjectured and Lagrange proved, every number
is a sum of four squares (\cite{Dic}, vol.~II, cap.~VIII,
p.~275). Now, while there is no known fast algorithm to decompose a
number as a sum of two squares, there is a very efficient
probabilistic algorithm, due to Rabin and Shallit \cite{ReS}, to
express a number as a sum of four squares.  Such a decomposition of an
integer $n$ yields a factorization $n=\alpha\bar{\alpha}$ in the ring
of Hurwitz integers. Since this ring is both a left and a right
Euclidean domain, it is natural to wonder if two distinct
decompositions of a number as a sum of four squares could yield a
factorization of that number in a manner analogous to what happens in
$\ZZ[i]$.

However, if one has two essentially distinct factorizations of $n$,
$n=\alpha\bar{\alpha}=\beta\bar{\beta}$, it is not always the case
that $\alpha$ and $\beta$ will have a non-trivial left or right
greatest common divisor. In fact, for a number that is a product of
two odd primes, $n=pq$, only a small (for $p$ and $q$ big) fraction,
precisely $\frac{p+q+2}{(p+1)(q+1)}$, of all possible pairs
$\alpha,\beta$ will have a (left or right) greatest common divisor
whose norm is neither $1$ nor $n$. But, in \cite{Pal}, Gordon Pall
proves a series of interesting results (namely, Theorems 6 and 7),
which imply, in particular, that given two quaternions $\alpha$ and
$\beta$ with integral coprime coordinates, if they are orthogonal and
have the same norm, then they either have the same right divisors, or
the same left divisors, or both. This suggests looking for orthogonal
decompositions of an integer as a sum of four squares, i.e.~orthogonal
integral quaternions whose norm is that integer. If one could find
some efficient way to find such decompositions, one would hope to get
an interesting factorization algorithm.

It was this line of thought that made us study ways of constructing
quaternions that are orthogonal to a given quaternion, namely using
the ternary vector product, and that led to the discovery of the main
results here presented, namely Theorems \ref{thm:vp_2ort} and
\ref{thm:vp_2mult}. Although these results are negative for the
purposes mentioned above, we believe that they are interesting in
their own way, showing some intimate connections between geometry and
arithmetic in the realm of quaternions.

\section{Quaternions, Lipschitz and Hurwitz integers}
\label{sec:quat-lipsch-hurw}

We start by recalling that the quaternion ring $\HH$ is the division
ring consisting of the additive group $\RR^4$ endowed with the only
multiplication (so one gets a ring structure) determined by choosing
$\be_1 =1$, the multiplicative unit, and by the relations:
$$
\be_2^2=\be_3^2=\be_4^2= \be_2\, \be_3\, \be_4=-\be_1 = -1,
$$
where $\{\be_1, \be_2, \be_3, \be_4\}$ is the canonical basis of
$\RR^4$. Usually, in this context, one denotes the elements of this
basis by $1,i,j,k$, respectively. One can easily check that, then:
\begin{multline}\label{produto}
    (u_0 + u_1 \,i + u_2 \,j + u_3 \,k)(v_0 + v_1 \,i + v_2 \,j + v_3
    \,k)=\hphantom{mmmmm}\\
    \hphantom{mmmmmmm} = (u_0 v_0-u_1 v_1-u_2 v_2-u_3 v_3)+
    (u_0 v_1+u_1 v_0+u_2 v_3-u_3 v_2) \,i \\
    + (u_0 v_2-u_1 v_3+u_2 v_0+u_3 v_1) \,j+ (u_0 v_3+u_1 v_2-u_2
    v_1+u_3 v_0) \,k.
  \end{multline}

Given a quaternion
$u=a+b i+c j+d k$, its {\em conjugate} is defined by
$\bar{u}=a-b i-c j-d k$, and its {\em norm} is $\Norm(u)=u\bar{u}$. We
set $\Re(u)=a$, the \emph{real} part of $u$, and $\Im(u)=b i+c j+d k$,
the \emph{imaginary} or \emph{vector} part of $u$.

The quaternions with integral coordinates are called {\em Lipschitz
  integers}, and they form a subring of $\HH$ that we will denote by
$\ll$. This is almost a left (and right) Euclidean ring for the norm,
in the sense that for any $\alpha,\beta\in\ll$ one can find
$q,r\in\ll$ such that $\alpha = \beta q + r$ and
$\Norm(r)\leq\Norm(\beta)$, but a strict inequality cannot always be
guaranteed (and the same for right division). However, one needs only
to slightly enlarge $\ll$ by adding the quaternions whose coordinates
are all halves of odd numbers to obtain a (left and right) Euclidean
ring. This yields the set $\hh=\ll\cup\left( \omega+\ll\right)$, with
$\omega=\frac12(1+i+j+k)$, whose elements are called {\em Hurwitz
  integers}. One can easily show that any Hurwitz integer has both a
left and a right associate which is a Lipschitz integer (see
\cite[Lemma 11.2.9]{Voight}).

The euclidianity of $\hh$ implies that every left and every right
ideal of $\hh$ is principal, and from this a sort of unique
factorization into \emph{primes}, Hurwitz integers whose norm is a
rational prime, can be deduced for {\em primitive} Hurwitz integers,
i.e.~those not divisible by a rational prime.

\begin{thm}[Unique Factorization Theorem]\label{uniqueF}
  Corresponding to each factorization of the norm $n$ of a primitive
  Hurwitzian integer $\alpha$ into a product $p_1 p_2 \cdots p_{k-1}
  p_k$ of rational primes, there is a factorization $$\alpha =\pi_1
  \pi_2\cdots \pi_{k-1} \pi_k$$ of $\alpha$ into a product of
  Hurwitzian primes that is said to be {\em modelled on} that
  factorization of $n$, that is, with $\Norm(\pi_i)=p_i$.

  Moreover, if $\alpha =\pi_1 \pi_2\cdots \pi_{k-1} \pi_k$ is any one
  factorization modelled on $p_1 p_2 \cdots p_{k-1} p_k$, then all the
  others have the form $$\alpha =\pi_1 \varepsilon_1\cdot
  \varepsilon_1^{-1} \pi_2 \varepsilon_2\cdot \varepsilon_2^{-1} \pi_2
  \varepsilon_3\cdot\> \cdots\> \cdot \varepsilon_{k-2}^{-1} \pi_{k-1}
  \varepsilon_{k-1}\cdot \varepsilon^{-1}_{k-1} \pi_k,$$ where
  $\varepsilon_1, \ldots, \varepsilon_{k-1}\in\hh^*$, i.e.~the
  factorization on a given model is unique up to unit-migration.
\end{thm}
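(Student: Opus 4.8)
The plan is to prove both assertions by induction on the number $k$ of prime factors, the base case $k=1$ being immediate since a Hurwitzian integer of prime norm is itself a Hurwitz prime. Throughout, the engine is the fact recalled above that every right ideal of $\hh$ is principal, together with the multiplicativity of the norm. For existence I would peel off the prime factors from the left, one at a time, in the prescribed order $p_1,p_2,\ldots,p_k$, so as to land exactly on the given model.

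For the inductive step of existence, let $\alpha$ be primitive with $\Norm(\alpha)=p_1p_2\cdots p_k$ and consider the right ideal $\alpha\hh+p_1\hh=\pi_1\hh$. From $p_1\in\pi_1\hh$, say $p_1=\pi_1\gamma$, one gets $\Norm(\pi_1)\mid p_1^2$, so $\Norm(\pi_1)\in\{1,p_1,p_1^2\}$. I would discard the two extreme cases using primitivity: if $\Norm(\pi_1)=1$ then $\alpha\hh+p_1\hh=\hh$, so $1=\alpha\mu+p_1\nu$ and hence $\bar\alpha=\Norm(\alpha)\mu+p_1\bar\alpha\nu$ is divisible by $p_1$ (as $p_1\mid\Norm(\alpha)$), forcing $p_1\mid\alpha$; and if $\Norm(\pi_1)=p_1^2$ then $\gamma$ is a unit, $\pi_1\hh=p_1\hh$, and again $p_1\mid\alpha$. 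Both contradict primitivity, so $\Norm(\pi_1)=p_1$ and $\pi_1$ is a Hurwitz prime. Then $\alpha\in\pi_1\hh$ gives $\alpha=\pi_1\alpha_1$ with $\Norm(\alpha_1)=p_2\cdots p_k$; since rational integers are central in $\hh$, $\alpha_1$ is again primitive, and the induction hypothesis supplies $\alpha_1=\pi_2\cdots\pi_k$ on the model $p_2\cdots p_k$.

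For uniqueness, suppose $\alpha=\pi_1\cdots\pi_k=\rho_1\cdots\rho_k$ are two factorizations modelled on $p_1\cdots p_k$, and set $\gamma=\rho_2\cdots\rho_k$. Then $\alpha=\rho_1\gamma\in\rho_1\hh$, while $p_1=\rho_1\bar\rho_1\in\rho_1\hh$, so $\pi_1\hh=\alpha\hh+p_1\hh\subseteq\rho_1\hh$. Writing $\pi_1=\rho_1\mu$ and comparing norms gives $\Norm(\mu)=1$, so $\mu\in\hh^*$ and $\rho_1=\pi_1\varepsilon_1$ with $\varepsilon_1=\mu^{-1}$. Cancelling $\pi_1$ in $\pi_1\pi_2\cdots\pi_k=\pi_1\varepsilon_1\gamma$ yields $\varepsilon_1^{-1}\pi_2\cdots\pi_k=\rho_2\cdots\rho_k$: two factorizations of $\gamma$, of norm $p_2\cdots p_k$, whose leftmost factor $\varepsilon_1^{-1}\pi_2$ is a Hurwitz prime of norm $p_2$. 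Applying the induction hypothesis to $\gamma$ produces units $\varepsilon_2,\ldots,\varepsilon_k$ realizing the asserted unit-migration pattern, and reassembling recovers the displayed form.

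The main obstacle is the peeling lemma inside the existence step, namely pinning down $\Norm(\pi_1)=p_1$ exactly; this is where primitivity does the essential work, since without it the generator of $\alpha\hh+p_1\hh$ could degenerate to a unit or to an associate of $p_1$ and the induction would fail. I would stress that these arguments invoke only principality and the multiplicativity of the norm, and therefore apply uniformly to every rational prime, including $p=2$, without recourse to the isomorphism $\hh/p\hh\cong M_2(\FF_p)$ available for odd $p$.
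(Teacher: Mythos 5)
The paper itself contains no proof of Theorem \ref{uniqueF}: it is stated as a known result, with the proof deferred to \cite{CeS} (Theorem 2, p.~57) and, in essence, to \cite{Lip}. So there is no in-paper argument to compare yours against; what can be said is that your induction --- generate the right ideal $\alpha\hh+p_1\hh$, use primitivity to force its generator to have norm exactly $p_1$, peel off that factor and recurse, then get uniqueness by identifying first factors up to a right unit --- is precisely the standard argument of the cited reference, and its core is sound. Your closing remark is also correct: nothing in this argument distinguishes $p=2$ from odd primes, since only principality of one-sided ideals and multiplicativity of the norm are used.

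Two assertions, however, need justifications that you omitted. First, in the uniqueness step you write $\pi_1\hh=\alpha\hh+p_1\hh$, but only the inclusion $\alpha\hh+p_1\hh\subseteq\pi_1\hh$ is immediate (both $\alpha=\pi_1(\pi_2\cdots\pi_k)$ and $p_1=\pi_1\bar\pi_1$ lie in $\pi_1\hh$); the reverse inclusion is not free, and without it the containment $\alpha\hh+p_1\hh\subseteq\pi_1\hh\cap\rho_1\hh$ does not yield $\pi_1\in\rho_1\hh$. The fix is your own peeling lemma: the generator $\sigma$ of $\alpha\hh+p_1\hh$ has norm $p_1$ (here primitivity of $\alpha$ is used again), and writing $\sigma=\pi_1\lambda$ forces $\Norm(\lambda)=1$, so $\sigma\hh=\pi_1\hh$; equivalently, $\sigma=\pi_1\lambda=\rho_1\mu$ with $\lambda,\mu$ units gives $\rho_1=\pi_1\lambda\mu^{-1}$ directly. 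Second, to apply the inductive hypothesis to $\gamma=\rho_2\cdots\rho_k$ you need $\gamma$ to be primitive; this holds by the same centrality argument you used for $\alpha_1$ (a rational prime $q$ dividing $\gamma$ would give $q\mid\rho_1\gamma=\alpha$), but it must be stated, since uniqueness genuinely fails for imprimitive quaternions: for instance $5=(2+i)(2-i)=(1+2i)(1-2i)$ are two factorizations modelled on $5\cdot 5$ that are not related by unit migration. With these two lines added, your proof is complete and correct.
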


This result is essentially contained in \cite{Lip} (p.~434), where
Lipschitz proves that integral quaternions have that same sort of
unique factorization up to factors of norm $2$. For a modern proof see
Theorem 2, p.~57 in \cite{CeS}.

Given $m\in\NN$, a quaternion $\alpha=a+b i+c j+d k\in\ll$ is said to
be \emph{primitive modulo} $m$ if $\gcd(a,b,c,d,m)=1$.  In
\cite[Theorem 1]{Pal}, Pall proves the following result, using some
classical results about quadratic forms, and of which we give here a
completely arithmetical proof.

\begin{thm}\label{PallTh1}
  If $\alpha\in\ll$ is primitive modulo $m$, where $m$ is odd and
  positive with $m \mid \Norm(\alpha)$, then $\alpha$ has a unique, up
  to left associates, right divisor of norm m, in $\ll$. One has an
  analogous result for left divisors.
\end{thm}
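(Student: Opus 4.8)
The plan is to realise all right divisors of $\alpha$ of norm $m$ as the generators of a single principal left ideal of $\hh$, and then to use the oddness of $m$ to decide which of those generators are Lipschitz. First I would pass to the Hurwitz order, where every left ideal is principal. Whenever $\Norm(\delta)=m$ one has $m=\overline{\delta}\delta$, so any right divisor $\delta$ of $\alpha$ with $\Norm(\delta)=m$ also right-divides $m$; hence the left ideal $\hh\delta$ contains both $\alpha$ and $m$, that is $I:=\hh\alpha+\hh m\subseteq\hh\delta$. Conversely, writing the principal left ideal as $I=\hh\delta_0$ produces a distinguished common right divisor $\delta_0$ of $\alpha$ and $m$, and the whole theorem will hinge on showing that $\Norm(\delta_0)=m$ exactly.

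To compute $\Norm(\delta_0)$ I would localise at each rational prime $p$. For $p\nmid m$ the integer $m$ is a unit and $I$ becomes the whole ring locally, so $p$ contributes nothing to $\Norm(\delta_0)$. For $p\mid m$ I would use the standard splitting $\hh\otimes\ZZ_p\cong M_2(\ZZ_p)$, under which $\Norm$ corresponds to $\det$. The image $A$ of $\alpha$ is then a matrix that is nonzero modulo $p$, because $\alpha$ is primitive modulo $m$, and satisfies $p^{e}\mid\det A$, where $p^{e}$ is the exact power of $p$ dividing $m$, because $m\mid\Norm(\alpha)$ forces $p^{e}\mid\det A$. A Smith normal form computation shows that $M_2(\ZZ_p)A+M_2(\ZZ_p)p^{e}$ is generated by $\mathrm{diag}(1,p^{e})$, whence $\Norm(\delta_0)$ has $p$-adic valuation equal to that of $m$. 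As this holds at every prime, $\Norm(\delta_0)=m$.

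Next I would settle uniqueness and then descend to $\ll$. Since $\hh$ is a domain with multiplicative norm, the inclusion $\hh\delta_0=I\subseteq\hh\delta$ of left ideals of equal norm $m$ forces $\hh\delta_0=\hh\delta$, so every right divisor of $\alpha$ of norm $m$ is a left-associate $u\delta_0$ with $u\in\hh^{*}$; these form a single class of $24$ elements in $\hh$. Choosing $\delta_0\in\ll$ (possible because every Hurwitz integer has a Lipschitz left-associate, which generates the same left ideal), I claim that $u\delta_0\in\ll$ only for the eight Lipschitz units $\pm1,\pm i,\pm j,\pm k$. Indeed, if $u\delta_0\in\ll$ then $(u\delta_0)\overline{\delta_0}=u\,m\in\ll$; but a half-integer Hurwitz unit has every coordinate equal to a half of an odd integer, and multiplication by the \emph{odd} integer $m$ cannot clear those denominators, so $u\in\ll$. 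Thus exactly the eight Lipschitz-unit multiples of $\delta_0$ lie in $\ll$, they are pairwise distinct since $\delta_0\neq0$, and they exhaust the Lipschitz right divisors of norm $m$; all are left-associated. The statement for left divisors follows by applying this to $\overline{\alpha}$, since conjugation sends right divisors to left divisors and preserves the norm and primitivity hypotheses.

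The main obstacle is the exact norm computation $\Norm(\delta_0)=m$, where both hypotheses are genuinely used: primitivity guarantees that $A$ does not vanish modulo any $p\mid m$, so no spurious factor of $p$ is lost, while $m\mid\Norm(\alpha)$ guarantees enough divisibility of $\det A$ to produce the full factor $m$. Once that local analysis is in place, the descent to $\ll$ and the count of eight are routine bookkeeping with units and ideals.
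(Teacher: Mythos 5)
The paper never proves this statement: it is imported verbatim from Pall (\cite{Pal}, Theorem~1), so there is no internal proof to compare against, and your argument has to be judged on its own and against Pall's classical treatment. On its own terms it is correct and genuinely different in method. Your route --- form the left ideal $I=\hh\alpha+\hh m$, use principality of left ideals in $\hh$ to write $I=\hh\delta_0$, and pin down $\Norm(\delta_0)=m$ by localizing --- is sound at every step I checked: for odd $p\mid m$ the splitting $\hh\otimes\ZZ_p\cong M_2(\ZZ_p)$ is legitimate (the Hamilton algebra ramifies only at $2$ and $\infty$, and $\hh$ is a maximal order), primitivity modulo $m$ does force the image of $\alpha$ to be nonzero mod $p$, and the Smith normal form computation $M_2(\ZZ_p)A+M_2(\ZZ_p)p^{e}=M_2(\ZZ_p)\,\mathrm{diag}(1,p^{e})V$ is right, so each local valuation of $\Norm(\delta_0)$ matches that of $m$. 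The nesting argument ($\hh\delta_0\subseteq\hh\delta$ with equal norms forces equality, since $\hh$ is a domain with multiplicative norm) and the parity argument that a half-integral unit $u$ cannot satisfy $um\in\ll$ for odd $m$ are both fine, and the conjugation reduction for left divisors works. What your approach buys is a conceptual local-global proof that shows exactly where each hypothesis enters, and it would generalize to other maximal orders; what it costs is the machinery of maximal orders and $p$-adic splittings, which Pall's elementary gcd-style arguments (and the Conway--Smith treatment the paper leans on elsewhere) avoid entirely.

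One loose end you should close: the theorem counts right divisors \emph{in} $\ll$, i.e.\ $\delta\in\ll$ with $\alpha=\lambda\delta$ and the cofactor $\lambda$ also in $\ll$. You show that exactly eight of the twenty-four Hurwitz divisors $u\delta_0$ lie in $\ll$, but not that their cofactors, a priori only in $\hh$, are Lipschitz. The same parity trick you already use settles it in one line: from $\alpha=\lambda(u\delta_0)$ one gets $m\lambda=\alpha\,\overline{\delta_0}\,\bar{u}\in\ll$, and if $\lambda$ had half-odd coordinates then so would $m\lambda$, because $m$ is odd --- a contradiction, so $\lambda\in\ll$. With that sentence added, the proof is complete.
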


\begin{proof}
  Since every left ideal of $\hh$ is principal, and every Hurwitz
  integer has a Lipschitz associate, there exists $\delta\in\ll$ such
  that $\hh \alpha + \hh m = \hh\delta$. In particular,
  $\delta = \beta\alpha+\gamma m$, for some $\beta,\gamma\in\hh$. But
  then
  $\Norm(\delta) = \Norm(\alpha)\Norm(\beta) +
  2\Re(\beta\alpha\bar{\gamma})m+\Norm(\gamma) m^2$, and thus
  $m\mid \Norm(\delta)$. Let $t\in\NN$ be the respective quotient, so
  that $\Norm(\delta) = mt$, and let $\sigma,\tau\in\hh$ be such that
  $\alpha = \sigma\delta$ and $m=\tau \delta$. Then
  $m^2=\Norm(\tau) \Norm(\delta) = \Norm(\tau)mt$ shows that
  $\tau\delta = m = \tau\bar{\tau} t$, and thus
  $\delta = \bar{\tau} t$. The fact that $\alpha$ is primitive modulo
  $m$ now entails $t=1$, showing the existence of a right divisor of
  $\alpha$ with norm $m$ in $\hh$. It remains to show that
  $\sigma\in\ll$. Since $m$ is odd, there are $x,y\in\ZZ$ such that
  $2x+my=1$. But then
  $\sigma = 2\sigma x + \alpha \bar{\delta} y\in\ll$.
  
  To prove uniqueness, up to left associates, assume that
  $\alpha = \xi \mu$ for some $\xi,\mu\in\ll$, with
  $\Norm(\mu)=m$. Then
  $\delta = \beta\alpha + \gamma m = (\beta\xi + \gamma
  \bar{\mu})\mu$, showing that $\mu$ is a right divisor of
  $\delta$. Since they have the same norm, one has
  $\epsilon := \beta\xi + \gamma \bar{\mu}\in \hh^*$. Using again the
  fact that $m$ is odd, one shows that $\epsilon\in\ll^*$ by noticing
  that
  $\epsilon = 2\epsilon x + \epsilon my = (2\epsilon) x + (\epsilon
  \mu) \bar{\mu}y = (2\epsilon) x + \delta \bar{\mu}y $.
\end{proof}

\noindent\textbf{Remarks:}
\begin{itemize}
\item This last result does not hold for $m$ even, as the following
  example shows: $1+i+j+k = (1+i)(1+j) = (1+k)(1+i)$, while $1+j$ and
  $1+i$ are not left associates in $\ll$. But it is very easy to see
  that the result does inconditionally hold in $\hh$.
\item The map $\hh\to\hh$ given by $\alpha\mapsto\bar{\alpha}$ is an
  anti-automorphism, and so any divisibility result on the left also
  holds on the right.
\end{itemize}

\medskip

Notice that while Theorem~\ref{uniqueF} relates factorizations
modelled on the same prime decomposition of the norm,
Theorem~\ref{PallTh1} gives information about factorizations of a
primitive quaternion modelled on different prime decomposition of its
norm. For example, if $\alpha =\pi_1 \pi_2 \pi_3$ is a factorization
of a primitive quaternion $\alpha$ corresponding to
$\Norm(\alpha)=p_1 p_2 p_3$, and $\alpha =\pi'_2 \pi'_1 \pi'_3$ is a
factorization corresponding to $\Norm(\alpha)=p_2 p_1 p_3$, then it
follows from the last theorem that $\pi_1 \pi_2$ and $\pi'_2 \pi'_1$
are right associates, and therefore $\pi_3$ and $\pi'_3$ are left
associates. It turns out that the version of Theorem~\ref{PallTh1} for
Hurwitz integers implies Theorem~\ref{uniqueF}, as it is fairly easy
to see.
\section{Orthogonality and Arithmetic}
\label{sec:orth-arithm-1}

From the expression \eqref{produto} above, that gives the product of
two generic quaternions, $u$ and $v$, it immediately follows that, for
the inner product $u\cdot v$ (as vectors of $\RR^4$), one has:
\begin{equation}
  \label{innerp}
  u\cdot v = \Re(u\bar{v}) =  \frac{1}{2}
  \left( u\bar{v} + v\bar{u}\right).
\end{equation}
Then, for all $u,v,\alpha\in\HH$,
\begin{equation}
  \label{innerpl}
  u\alpha\cdot v = \Re((u\alpha)\bar{v})= \Re(u(\alpha\bar{v}))= u\cdot
  \overline{\alpha\bar{v}} = u\cdot v\bar{\alpha},
\end{equation}
and using the obvious fact that $u\cdot v=\bar{u}\cdot \bar{v}$, one
also has:
\begin{equation}
  \label{innerpr}
  \alpha u\cdot v = \bar{u}\bar{\alpha}\cdot \bar{v}= \bar{u}\cdot
  \bar{v} \alpha = u\cdot \bar{\alpha} v.
\end{equation}
Let us also point out that the expression \eqref{produto} also yields:
\begin{equation}
  \label{uv}
  uv= \Re(v) u + \Re(u) v - u\cdot v + \Im(u)\times \Im(v),
\end{equation}
where $\times$ denotes the usual vector product in $\RR^3$, after
using the natural identification of pure quaternions with
$3$-dimensional vectors, or equivalently, by setting
$$
(u_1 i+u_2 j+u_3 k)\times (v_1 i+v_2 j+v_3 k)= \left|
  \begin{matrix}
    u_1 &  u_2 &  u_3  \\
    v_1 & v_2 & v_3 \\
    i & j & k
  \end{matrix}
\right| $$
with the obvious meaning.

In what follows, we will use the notation $u\perp v$ to mean that the
quaternions $u$ and $v$ are orthogonal, i.e.~$u\cdot v=0$.  In
\cite{Pal}, Pall shows that there are interesting connections between
arithmetic properties of Lipschitz integers and orthogonality. We here
exhibit some others, and provide a simpler arithmetical proof for a
particular case of a result of Pall.

\begin{prop}\label{sameRDinnerp}
  For any $u, v, w\in\HH$, one has 
  $$(uv)\cdot(uw) = \Norm(u)\> (v\cdot w).$$
  In particular, if $\alpha, \beta\in\ll$ have a common left divisor
  $\tau$, then $\Norm(\tau) \mid \alpha\cdot\beta$. One has analogous
  results for right common divisors.
\end{prop}

\begin{proof} 
  This is an immediate consequence of \eqref{innerpr}.
\end{proof}

\begin{cor}\label{ijkOrtho}
  Let $\epsilon,\delta\in\{1,i,j,k\}$ with $\epsilon\not=\delta$.
  Then, for any $\alpha\in\HH$, $\alpha\epsilon\perp \alpha\delta$ and
  $\epsilon\alpha\perp \delta\alpha$.
\end{cor}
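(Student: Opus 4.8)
The plan is to read off both orthogonality relations as immediate consequences of Theorem \ref{sameRDinnerp}, the only extra ingredient being that distinct elements of $\{1,i,j,k\}$ are mutually orthogonal. Indeed, $\{1,i,j,k\}$ is precisely the canonical orthonormal basis $\{\be_1,\be_2,\be_3,\be_4\}$ of $\RR^4$, so $\epsilon\neq\delta$ forces $\epsilon\cdot\delta=0$.

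For the left-multiplication statement I would apply Theorem \ref{sameRDinnerp} with $u=\alpha$, $v=\epsilon$, $w=\delta$, obtaining $(\alpha\epsilon)\cdot(\alpha\delta)=\Norm(\alpha)\,(\epsilon\cdot\delta)=0$, which is exactly $\alpha\epsilon\perp\alpha\delta$. For the right-multiplication statement I would invoke the right-divisor analogue of Theorem \ref{sameRDinnerp} mentioned in its statement, namely $(vu)\cdot(wu)=\Norm(u)\,(v\cdot w)$; this is proved by the same single line, since $2\,(vu)\cdot(wu)=vu\bar u\bar w+wu\bar u\bar v=\Norm(u)\,(v\bar w+w\bar v)$, using that $\Norm(u)=u\bar u$ is a real scalar and hence central. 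Taking $u=\alpha$, $v=\epsilon$, $w=\delta$ then gives $(\epsilon\alpha)\cdot(\delta\alpha)=\Norm(\alpha)\,(\epsilon\cdot\delta)=0$.

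Alternatively, the right case can be deduced from the left one by conjugation: since the inner product satisfies $\bar u\cdot\bar v=u\cdot v$ and $\overline{\epsilon\alpha}=\bar\alpha\,\bar\epsilon$, one has $(\epsilon\alpha)\cdot(\delta\alpha)=(\bar\alpha\bar\epsilon)\cdot(\bar\alpha\bar\delta)=\Norm(\alpha)\,(\bar\epsilon\cdot\bar\delta)$, which again vanishes. There is no real obstacle here: the statement is a direct specialization of Theorem \ref{sameRDinnerp}, and the only thing to verify is the orthonormality of the quaternion basis, which is built into the definition of $\HH$ as $\RR^4$ equipped with its canonical inner product.
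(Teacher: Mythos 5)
Your proposal is correct and is essentially the paper's own proof: the paper likewise deduces both relations immediately from Theorem \ref{sameRDinnerp} together with the orthogonality of distinct elements of $\{1,i,j,k\}$, relying on the stated right-hand analogue for the case $\epsilon\alpha\perp\delta\alpha$. Your explicit verification of that analogue (and the alternative via conjugation) is sound but just fills in details the paper leaves implicit.
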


\begin{proof} This is an immediate consequence of the previous
  proposition, and the fact that $\epsilon\perp \delta$. \end{proof}

It follows from Theorem 6 in \cite{Pal} that two non-associate
Hurwitzian primes that have the same norm cannot be orthogonal. We
show here that this can be directly deduced from the unique
factorization theorem.

\begin{thm}
  If $\alpha, \beta\in\hh$ are primes with the same norm, and
  $\alpha\perp \beta$, then each one is a left, as well as a right
  associate of the other.
\end{thm}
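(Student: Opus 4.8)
The plan is to recast the two one-sided association relations as divisibility conditions and then to force both of them simultaneously. Since $\Norm(\alpha)=\Norm(\beta)=p$ with $p$ a rational prime, $\alpha$ and $\beta$ are right associates exactly when $\beta\in\alpha\hh$; writing $\beta=\alpha\mu$ and multiplying on the left by $\bar\alpha$ shows this is equivalent to $p\mid\bar\alpha\beta$ (the quotient $\mu$ then automatically having norm $1$, hence being a unit). Symmetrically, $\alpha$ and $\beta$ are left associates precisely when $p\mid\beta\bar\alpha$. So the statement reduces to proving that $p$ divides \emph{both} $\bar\alpha\beta$ and $\beta\bar\alpha$ in $\hh$.

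First I would extract the geometric input. By \eqref{innerp}, the hypothesis $\alpha\cdot\beta=0$ gives $\alpha\bar\beta+\beta\bar\alpha=0$; and since conjugation preserves the inner product, $\bar\alpha\cdot\bar\beta=\alpha\cdot\beta=0$ as well, whence $\bar\alpha\beta+\bar\beta\alpha=0$. Thus both $\bar\alpha\beta$ and $\beta\bar\alpha$ are \emph{pure} quaternions (zero real part) of norm $p^2$; being pure, they have all-integer coordinates, i.e.\ they lie in $\ll$.

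The arithmetic engine is Theorem \ref{uniqueF}. As $\Norm(\bar\alpha)=\Norm(\beta)=p$, both $\bar\alpha$ and $\beta$ are Hurwitz primes, so $\bar\alpha\beta=\bar\alpha\cdot\beta$ is a factorization of a norm-$p^2$ integer modelled on $p\cdot p$; moreover purity, in the form $\bar\beta\alpha=-\bar\alpha\beta$, rewrites the very same element as $(-\bar\beta)\cdot\alpha$, a second factorization modelled on $p\cdot p$. If $\bar\alpha\beta$ is primitive, the unit-migration clause of Theorem \ref{uniqueF} produces a unit $\epsilon\in\hh^*$ with $\alpha=\epsilon^{-1}\beta$, so that $\beta=\epsilon\alpha$ and $\alpha,\beta$ are left associates; if instead $\bar\alpha\beta$ is imprimitive, then, its norm being $p^2$, it is divisible by $p$, giving right associates. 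Running the identical argument on the two factorizations $\beta\cdot\bar\alpha$ and $\alpha\cdot(-\bar\beta)$ of $\beta\bar\alpha$ yields the corresponding conclusion for $\beta\bar\alpha$.

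The main obstacle I anticipate is the final combination: upgrading these separate analyses to the simultaneous conclusion $p\mid\bar\alpha\beta$ and $p\mid\beta\bar\alpha$, equivalently ruling out the possibility that one of the two products is primitive while the other is not. Purity together with unique factorization delivers each relation only through a primitivity dichotomy, so the real work lies in proving that $\alpha\perp\beta$ forces both products to be imprimitive at once. The natural way to attack this is to pass to the reduction $\hh/p\hh\cong M_2(\FF_p)$, in which $\alpha$ and $\beta$ map to rank-one matrices $A$ and $B$; there $p\mid\bar\alpha\beta$ translates into equality of the column spaces of $A$ and $B$, while $p\mid\beta\bar\alpha$ translates into equality of their null spaces, and the task becomes showing that orthogonality compels both incidences simultaneously. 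Making orthogonality, an identity over $\ZZ$, bear on these two mod-$p$ conditions at the same time is the delicate heart of the proof.
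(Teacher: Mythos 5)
Your reductions and your two case analyses are correct, and up to conjugation they reproduce the paper's actual proof: the paper sets $\gamma=\alpha\bar\beta$, notes that orthogonality gives $\gamma=-\beta\bar\alpha$, and runs exactly your dichotomy --- if $\gamma$ is imprimitive then $\gamma=p\varepsilon$ for a unit $\varepsilon$ and $\alpha=\varepsilon\beta$; if $\gamma$ is primitive then $\alpha\cdot\bar\beta$ and $(-\beta)\cdot\bar\alpha$ are two factorizations modelled on $p\cdot p$ and Theorem \ref{uniqueF} makes $\alpha$ and $\beta$ right associates --- closing with the remark that $\alpha\perp\beta\implies\bar\alpha\perp\bar\beta$ transfers each one-sided version to the other. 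The genuine gap in your proposal is that you never finish: the step you yourself call ``the delicate heart'', namely showing that $p$ divides $\bar\alpha\beta$ and $\beta\bar\alpha$ \emph{simultaneously}, is left as a program (reduction to $\hh/p\hh\cong M_2(\FF_p)$, comparison of column spaces and null spaces), not a proof.

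Worse, that program cannot be carried out, because the conjunction you are aiming at is false. Take $\alpha=1+2i$ and $\beta=2j-k=\alpha(-k)$: these are orthogonal Hurwitz primes of norm $5$, and $\bar\alpha\beta=-5k$ is divisible by $5$, so they are right associates; but $\beta\bar\alpha=(2j-k)(1-2i)=4j+3k$ is primitive, so $5\nmid\beta\bar\alpha$ and $\beta\neq\epsilon\alpha$ for every unit $\epsilon$ (the Lipschitz left associates of $\alpha$ are exactly $\pm(1+2i)$, $\pm(-2+i)$, $\pm(j-2k)$, $\pm(2j+k)$, while the sixteen half-odd units produce non-Lipschitz products). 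So the mixed case --- one of your two products imprimitive, the other primitive --- genuinely occurs, and no mod-$p$ argument will rule it out. What is provable, and what each single one of your two dichotomies \emph{already} establishes with no combination step at all, is the disjunction: $\alpha$ and $\beta$ are left associates \emph{or} right associates (your $\bar\alpha\beta$ analysis yields left association in the primitive case and right association in the imprimitive case). That disjunction is also all the paper's own proof delivers, and it matches Pall's formulation quoted in the introduction (``either the same right divisors, or the same left divisors, or both''); the theorem's phrase ``left and right associate'' overstates the true conclusion, and your attempt to honor the ``and'' literally is precisely what sent you down an impossible final step. Dropping that last section and stating the disjunctive conclusion turns your proposal into a complete and correct proof, essentially identical to the paper's.
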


\begin{proof} Let $p=\Norm(\alpha)=\Norm(\beta)$. From
  $\alpha\perp\beta$ one gets that
  $\alpha\bar{\beta}=-\beta\bar{\alpha}$. Now, if the quaternion
  $\gamma=\alpha\bar{\beta}$ is not primitive, then $m\mid\gamma$ for
  some $m\in\NN$ with $m>1$. But then, from
  $m^2\mid\Norm(\gamma)=p^2$, it follows that $m=p$. But then
  $\alpha\bar{\beta} = p \varepsilon=\varepsilon p$, for some unit
  $\varepsilon$.  Since $p=\beta\bar{\beta}$, one gets
  $\alpha=\varepsilon\beta$. From
  $\beta\bar{\alpha}=-\alpha\bar{\beta}= -p \varepsilon$, one gets
  $\beta=-\varepsilon\alpha$ (and in this case one sees that
  $\varepsilon^2=-1$, and therefore $\varepsilon=\pm i, \pm j,\pm k$).

  If $\gamma$ is primitive, then $\alpha\bar{\beta}$ and
  $-\beta\bar{\alpha}$ are two factorizations of $\gamma$ modelled on
  $\Norm(\gamma)= p p$, and the unique factorization theorem implies
  that $\alpha$ and $\beta$ are right associates.

  Finally note that $\alpha\perp \beta\implies \bar{\alpha}\perp
  \bar{\beta}$, which allows to deduce the left version of the result
  from its right version, and vice-versa.\end{proof}

With non-primes one can obtain examples that are a little more
interesting.  For instance, from the previous corollary, it follows
that if $\pi$ and $\rho$ are any two quaternions, then $\pi i\rho$ and
$\pi\rho$ have the same norm and are orthogonal. The question of what
exactly is the left greatest common divisor of these two quaternions,
leads to:

\begin{prop}
  \label{thm:igama}
  Let $\gamma=z+w j\in\ll$, with $z, w\in\ZZ[i]$, be an odd quaternion
  {\rm(}i.e.~$\gamma$ has an odd norm{\rm)}. Then:
  $$i\gamma\hh+\gamma\hh =1\quad \iff\quad (z,w)=1 \quad 
  (\text{in } \ZZ[i]).$$
\end{prop}

\begin{proof} Set $I= i\gamma\hh+\gamma\hh$. It is clear that if
  $\delta\mid z$ and $\delta\mid w $, with $\delta\in\ZZ[i]$, then
  $\delta$ is a left divisor of $i\gamma$, since of course
  $\delta\mid\gamma$ and it commutes with $i$. Therefore,
  $(z,w)=(\delta)$ implies that $ I\subseteq \delta\hh$.

  On the other hand, since $i\gamma=z i+w k$, $\gamma i=z i-w k$, one
  has:
  $$2z i = i\gamma+\gamma i\in I$$
  and
  $$2w k = i\gamma-\gamma i\in I.$$
  Hence: $$2z, 2w\in I.$$ Now, $(2,\Norm(\gamma))=1$ implies
  that there are $x,y\in\ZZ$ such that $2x+\gamma\bar{\gamma}y=1$. In
  particular, there is $x\in\ZZ$ with $2 x\equiv 1\pmod I$. From this
  one concludes that $z, w\in I$, and so, if these are
  coprime, it follows that $I=1$.
\end{proof}

Note that from an algorithm to compute $\pi i\rho$ from the quaternion
$\pi\rho$ one would get a factorization algorithm for some integers,
namely semi-primes. In fact, suppose we have a semi-prime number
$n=pq$ with $p$ and $q$ to be determined. Using an algorithm like the
one in \cite{ReS}, one can find $\alpha\in\ll$ such that
$\Norm(\alpha)=n$, and one has $\alpha=\pi\rho$, for some primes
$\pi,\rho\in\hh$. If one could determine $\pi i\rho$ from $\alpha$,
then using the Euclidean algorithm, one would get $\pi$ and $\rho$,
since the previous result easily implies that
$\pi i\rho\hh+\pi\rho\hh = \pi\hh$ (and, in an entirely analogous way,
$\hh\pi i\rho+\hh\pi\rho = \hh\rho$). This would yield $p$ and $q$. In
order to get an interesting factoring algorithm along these lines it
would, of course, be enough to find a method of determininig a
reasonable sized neighborhood in the orthogonal space to $\alpha$
where $\pi i\rho$ would be located. Given that integer factorization
seems to be a very hard problem, it is to be expected that the
relation between the coordinates of $\pi\rho$ and the ones of
$\pi i\rho$ will be rather subtle.

We leave here just one example that illustrates the seemling lack of
relation between the coordinates of $\pi\rho$ and $\pi i \rho$, as
well as the natural variations:
{\small \baselineskip=1.2\baselineskip
$$
\begin{array}{rcl|rcl}
  \pi &=& 1+i+3j+6k \quad{\scriptstyle (N(\pi) = 47)}
  & \rho &=& 1+2i+5j+7k \quad{\scriptstyle (N(\rho) = 79)}\\[2mm]
  \pi\rho &=& -58-6i+13j+12k & \rho\pi &=& -77+4i+10j+14k \\[2mm]
  \pi i\rho &=& -12+56-12j-17k & \rho i\pi &=& 6+56i-10j-21k \\[2mm]
  \pi j\rho &=& -3-10i+30j-52k & \rho j\pi &=& -13-12i+30j-50k \\[2mm]
  \pi k\rho &=& -14-21i-50j-24k & \rho k\pi &=& -12-17i-52j-54k
\end{array}
$$
}

We end this section by showing that one can easily get a $\ZZ$-basis
for the $\ZZ$-module of the integral quaternions that are orthogonal
to a given primitive integral quaternion.

\begin{prop}\label{orto}
  Let $\alpha=a+b i+c j+d k\in\ll$ be a primitive quaternion. Let
  $g_1,g_2\in\ZZ$ be such that $g_1\ZZ=a\ZZ+b\ZZ$, $g_2\ZZ=c\ZZ+d\ZZ$,
  and $x_0,y_0,z_0,t_0\in\ZZ$ be such that: $a x_0+b y_0 = g_1$,
  $c z_0+d t_0 = g_2$. In case $g_1=0$, we choose $x_0=0$ and $y_0=1$,
  and similarly if $g_2=0$. We make the convention that
  $\frac{0}{0}+\frac{0}{0}i= 1$.  Then the $\ZZ$-module
  $\alpha^\perp\cap\ll$ is generated by the quaternions:
  $$g_2 (x_0+ y_0 i)-g_1(z_0 + t_0 i)j, 
  \quad \frac{b}{g_1} - \frac{a}{g_1} i,\quad \left(\frac{d}{g_2} -
    \frac{c}{g_2} i\right)j$$
\end{prop}

\begin{proof}
  Suppose we are given $\alpha=a+b i+c j+d k\in\ll$, primitive.  We
  want to find all vectors in $\ll\cap\alpha^\perp$. Let $g_1, g_2$ be
  as in the statement above, and assume first that $g_1
  g_2\not=0$. Let $x_0,y_0,z_0,t_0\in\ZZ$ be such that:
  \begin{eqnarray}
    a x_0+b y_0 &=& g_1\label{eql1}\\
    c z_0+d t_0 &=& g_2\label{eql2}
  \end{eqnarray}
  Now, any quaternion $\gamma = x+y i+z j+t k\in\ll$ such that
  \begin{equation*}
    a x+b y+c z+d t=0. 
  \end{equation*}
  must satisfy
  \begin{eqnarray*}
    a x+b y&=& r_1 g_1\\
    c z+d t&=& r_2 g_2,
  \end{eqnarray*}
  for some $r_1,r_2\in\ZZ$ with $r_1 g_1+r_2 g_2=0$. Because $\alpha$
  is primitive, $g_1\ZZ+g_2\ZZ=1$, and therefore there exists
  $r\in\ZZ$ such that $r_1=r g_2$ and $r_2=-r g_1$. It then follows
  from the well known caracterization of the solutions of linear
  Diophantine equations that:
  \begin{equation}\label{xyzt}
    \begin{split}
      x &= r_1 x_0 + \frac{b}{g_1} s = r g_2 x_0 + \frac{b}{g_1} s\\
      y &= r_1 y_0 - \frac{a}{g_1} s = r g_2 y_0 - \frac{a}{g_1} s\\
      z &= r_2 z_0 + \frac{d}{g_2} u = -r g_1 z_0 + \frac{d}{g_2} u\\
      t &= r_2 t_0 - \frac{c}{g_2} u = -r g_1 t_0-\frac{c}{g_2} u,
    \end{split}
  \end{equation}
  for some $s,u\in\ZZ$.

  It is easy to check that the result holds in the two cases in which
  $g_1 g_2=0$, if one uses the conventions formulated in the statement
  of the lemma.
\end{proof}

\section{The vector product in $\HH$ and the arithmetic of $\ll$}
\label{sec:vector-product-hh}

We show in this section that some triple vector products of some
quaternions involving a given quaternion $\alpha$, being orthogonal to
$\alpha$, are nevertheless multiples of $\alpha$. The results are
really of an algebraic nature, in the sense that they follow from some
polynomial identities, which one can (implicitly) verify using, for
instance, SageMath \cite{sage}. Albeit the Sage verification being a
proof of the following two theorems, it not a very enlightning one. We
provide proofs that we believe to be interesting in their own right.

We start by recalling the notion of vector product in $\RR^n$.

\begin{defin}\label{defgenvp}
  For $\bu_1, \bu_2, \ldots, \bu_{n-1}\in\RR^n$, define their vector
  product by
  $$\times(\bu_1, \bu_2, \ldots, \bu_{n-1}) = \bu_1\times
  \bu_2\times\cdots\times \bu_{n-1} := \left|
    \begin{matrix}
      u_{1,1} &  u_{1,2}  & \cdots &  u_{1,n}  \\
      \vdots  &  \vdots   & \cdots &  \vdots   \\
      u_{n-1,1} & u_{n-1,2} & \cdots &  u_{n-1,n}\\
      \be_1 & \be_2 & \cdots & \be_n
    \end{matrix}
  \right|
  $$
  (with the obvious meaning, using Laplace expansion on the last row),
  where ${\mathbf e_i}$ is the $i$-th vector of the canonical basis of
  $\RR^n$, and $u_{i,j}$ is the $j$-th coordinate of the vector
  $\bu_i$, on that same basis.
\end{defin}

It immediately follows from this definition that, for any vectors
$\bu_i, \bv\in\RR^n$, where $i=1,\ldots,n-1$, one has:
\begin{equation}
  \label{eq:vecinner}
  (\bu_1\times \bu_2\times \cdots\times \bu_{n-1})\cdot
  \bv = \left|
    \begin{matrix}
      u_{1,1} &  u_{1,2}  & \cdots &  u_{1,n}  \\
      \vdots  &  \vdots   & \cdots &  \vdots   \\
      u_{n-1,1} & u_{n-1,2} & \cdots &  u_{n-1,n}\\
      v_1   &     v_2   & \cdots &   v_n     \\
    \end{matrix}
  \right| 
\end{equation}
\begin{equation}
  \label{eq:vecperp}
  (\bu_1\times \bu_2\times \cdots\times \bu_{n-1})\perp \bu_i, \text{
    for all }  i=1,\ldots, n-1.
\end{equation}
Also, for $\alpha,\beta,\gamma,\delta\in\HH$, it is clear that:
\begin{align}
  \overline{\alpha\times\beta\times\gamma}
  &= -\,\bar{\alpha}\times\bar{\beta}\times \bar{\gamma}\, ,
    \label{eq:vpconj}\\ 
  (\alpha\times\beta\times\gamma)\cdot\delta
  &= - (\alpha\times\beta\times\delta)\cdot\gamma\, , \\
  \alpha\times\beta\times 1
  & = \Im(\alpha) \times
    \Im(\beta) = \frac12 (\alpha\beta - \beta\alpha) \text{ is a pure
    quaternion}, \label{eq:vpx1}
\end{align}
the last equality following from~\eqref{uv}.

\begin{prop}
\label{innervector}
  For any vectors $\bu_i, \bv_j\in\RR^n$, with
  $i,j=1,\ldots,n-1$, one has:
  $$  (\bu_1\times \bu_2\times \cdots\times \bu_{n-1})\cdot (\bv_1\times
    \bv_2\times \cdots\times \bv_{n-1}) = \det(\bu_i\cdot
    \bv_j).
  $$
\end{prop}

\begin{proof}
  Since the two maps from $\left(\RR^n\right)^{n-1}$ to $\RR$ given by
  $$(\bu_1, \bu_2, \ldots, \bu_{n-1})\to (\bu_1\times \bu_2\times
  \cdots\times \bu_{n-1}) \cdot (\bv_1\times \bv_2\times \cdots\times
  \bv_{n-1})$$ and by
  $$(\bu_1, \bu_2,\ldots, \bu_{n-1})\to \det(\bu_i\cdot \bv_j)$$
  are both multilinear, it is enough to check the validity of the
  claimed result for $\bu_i, \bv_j\in\{\be_1, \be_2, \ldots,
  \be_n\}$. And because the mentioned maps are also alternate, it is
  enough to check that:
  $$
  (\be_{\sigma(1)}\times \be_{\sigma(2)}\times \cdots\times
  \be_{\sigma(n-1)}) \cdot (\be_{\tau(1)}\times \be_{\tau(2)}\times
  \cdots\times \be_{\tau(n-1)}) = \det(\be_{\sigma(i)}\cdot
  \be_{\tau(j)}),
  $$
  for all $\sigma, \tau\in \ss_n$, where $\ss_n$ denotes the symmetric
  group on $\{1,\ldots,n\}$.

  Now,
  $$\be_{\sigma(1)}\times \cdots\times \be_{\sigma(n-1)} =
  \sum\limits_{\tau\in\ss_n} \sgn(\tau)\, e_{\sigma(1)\tau(1)} \cdots
  e_{\sigma(n-1)\tau(n-1)}\, \be_{\tau(n)},$$ where, as in the notation
  used in Definition \ref{defgenvp}, $e_{ij}$ denotes the $j$-th
  coordenate of $\be_i$. The only non-zero term of this sum is the one
  where $\tau= \sigma$. It follows that:
  $$
  \be_{\sigma(1)}\times \be_{\sigma(2)}\times \cdots\times
  \be_{\sigma(n-1)} = \sgn(\sigma)\, \be_{\sigma(n)}.
  $$
  Therefore:
  $$
  (\be_{\sigma(1)}\times \be_{\sigma(2)}\times \cdots\times
  \be_{\sigma(n-1)}) \cdot (\be_{\tau(1)}\times \be_{\tau(2)}\times
  \cdots\times \be_{\tau(n-1)}) = \sgn(\sigma\tau)\,
  \delta_{\sigma(n),\tau(n)}.
  $$
  On the other hand,
  $$\det(\be_{\sigma(i)}\cdot \be_{\tau(j)})=
  \sum\limits_{\gamma\in\ss_{n-1}} \sgn(\gamma) (\be_{\sigma(1)}\cdot
  \be_{\tau(\gamma(1))}) \cdots (\be_{\sigma(n-1)}\cdot
  \be_{\tau(\gamma(n-1))}),$$
  which is non-zero only for
  $\gamma= \tau^{-1}\sigma_{|\{1,\ldots,n-1\}}$, and this happens only
  when $\sigma(n)=\tau(n)$. It easily follows that:
  $$
  \det(\be_{\sigma(i)}\cdot \be_{\tau(j)})= \sgn(\tau^{-1}\sigma)\,
  e_{\sigma(n)\tau(n)},
  $$
  proving what we wanted to show.
\end{proof}

From this proposition, one sees that, for any
$\alpha, \beta, \gamma\in \HH$,
$$\Norm(\alpha\times \beta\times \gamma) = (\alpha\times \beta\times
\gamma)\cdot(\alpha\times \beta\times \gamma)= \left|
\begin{matrix} 
  \Norm(\alpha)   & \alpha\cdot \beta & \alpha\cdot \gamma  \\
  \alpha\cdot \beta &    \Norm(\beta)  & \beta\cdot \gamma  \\
  \alpha\cdot \gamma & \beta\cdot \gamma &    \Norm(\gamma)   \\
\end{matrix}
\right|, $$ from which one easily gets
\begin{multline*}
  \Norm(\alpha\times \beta\times \gamma)=
  \Norm(\alpha\beta\gamma)-\Norm(\alpha)(\beta\cdot \gamma)^2 -
  \Norm(\beta)(\alpha\cdot \gamma)^2 -\\
  - \Norm(\gamma)(\alpha\cdot \beta)^2+2(\alpha\cdot \beta)
  (\alpha\cdot \gamma) (\beta\cdot \gamma).
\end{multline*}

In particular, if $\beta\perp \alpha$ and $\gamma\perp \alpha$, then
$\Norm(\alpha)\mid \Norm(\alpha\times \beta\times \gamma)$. It follows
from Theorem \ref{PallTh1} that, in this case,
$\alpha\times \beta\times \gamma$ has both a left and a right divisor
with the same norm as $\alpha$. We will show that, in both cases, it
turns out that $\alpha$ is that divisor.

\begin{thm}
  \label{thm:vp_2ort}
  Given $\alpha\in\ll$, and $\beta, \gamma\in\ll$ such that
  $\beta\perp\alpha$ and $\gamma\perp\alpha$, one has
  $$\alpha\times\beta\times\gamma\in\alpha\ll\cap\ll\alpha.$$
\end{thm}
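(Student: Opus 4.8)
The plan is to reduce the whole statement to a single quaternionic identity for the triple vector product and then to use the two orthogonality hypotheses purely algebraically; this avoids both Theorem \ref{PallTh1} and any appeal to primitivity. The identity I would establish is that, for all $\alpha,\beta,\gamma\in\HH$,
$$\alpha\times\beta\times\gamma=\frac12\left(\gamma\bar\beta\alpha-\alpha\bar\beta\gamma\right),$$
up to the overall sign fixed by the orientation convention. To prove it, I would first check that the right-hand side is orthogonal to each of $\alpha,\beta,\gamma$: writing the inner products through \eqref{innerp} and using that the scalar part of a quaternionic product is invariant under cyclic permutations, each of the three inner products collapses to a difference of two equal scalar parts, hence vanishes. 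Since for linearly independent $\alpha,\beta,\gamma$ the common orthogonal complement is one-dimensional, the right-hand side is a scalar multiple of $\alpha\times\beta\times\gamma$; pairing both trilinear maps with a fixed generic $\bv$ turns this into an identity of polynomials in which the right factor is the irreducible determinant of property~(1), so a degree count forces the scalar to be constant, and evaluating at $\beta=1$ (where $\alpha\times1\times\gamma$ is the commutator $\tfrac12(\gamma\alpha-\alpha\gamma)$) fixes it to $\pm1$. Establishing this identity is the main obstacle; the rest is formal.

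With the identity in hand I would translate the hypotheses. From $\alpha\cdot\beta=0$, relation \eqref{innerp} gives $\alpha\bar\beta=-\beta\bar\alpha$ and, equivalently, $\bar\beta\alpha=-\bar\alpha\beta$; from $\alpha\cdot\gamma=0$ one gets $\gamma\bar\alpha=-\alpha\bar\gamma$ and $\bar\alpha\gamma=-\bar\gamma\alpha$. I would also record the elementary fact that the scalar part of a Lipschitz integer is an ordinary integer, so the imaginary part of a product of two elements of $\ll$ again lies in $\ll$.

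For the left divisibility, put $q=\alpha\times\beta\times\gamma$ and compute $\bar\alpha q$. The term $\bar\alpha\alpha\bar\beta\gamma$ already carries the factor $\Norm(\alpha)=\bar\alpha\alpha$, while in $\bar\alpha\gamma\bar\beta\alpha$ I would substitute $\bar\alpha\gamma=-\bar\gamma\alpha$ and then $\alpha\bar\beta\alpha=-\Norm(\alpha)\beta$, producing $\Norm(\alpha)\,\bar\gamma\beta$. This yields
$$\bar\alpha q=\frac{\Norm(\alpha)}2\left(\bar\gamma\beta-\bar\beta\gamma\right).$$
Because $\bar\beta\gamma=\overline{\bar\gamma\beta}$, the bracket is twice the imaginary part of the Lipschitz integer $\bar\gamma\beta$ and so lies in $\ll$; multiplying on the left by $\alpha/\Norm(\alpha)$ gives $q=\alpha\cdot\tfrac12(\bar\gamma\beta-\bar\beta\gamma)\in\alpha\ll$.

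The right divisibility is symmetric. Computing $q\bar\alpha$ and using $\gamma\bar\alpha=-\alpha\bar\gamma$ together with $\alpha\bar\beta\alpha=-\Norm(\alpha)\beta$ gives $q\bar\alpha=\frac{\Norm(\alpha)}2(\gamma\bar\beta-\beta\bar\gamma)$, and the same imaginary-part argument places $q=\tfrac12(\gamma\bar\beta-\beta\bar\gamma)\,\alpha$ in $\ll\alpha$. Hence $q\in\alpha\ll\cap\ll\alpha$. I note that Lemma \ref{orto} suggests an alternative, more computational route, in which one reduces to primitive $\alpha$ by multilinearity, expands $\beta$ and $\gamma$ in the three given generators of $\alpha^\perp\cap\ll$, and checks the three basic products $\alpha\times\mu_i\times\mu_j$ directly; the identity-based argument above seems shorter and needs no primitivity hypothesis.
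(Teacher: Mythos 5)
Your proposal is correct, but it takes a genuinely different route from the paper's proof. The paper assumes $\alpha$ primitive (WLOG), invokes Lemma \ref{orto} to produce three explicit generators of the $\ZZ$-module $\alpha^\perp\cap\ll$, and then checks the membership for the three basic products $\alpha\times\beta_i\times\beta_j$ by direct computation. You instead establish the closed-form identity $\alpha\times\beta\times\gamma=\tfrac12\left(\gamma\bar\beta\alpha-\alpha\bar\beta\gamma\right)$ (which is indeed the correct formula for the paper's orientation convention; e.g.\ both sides give $k$ on $(1,i,j)$ and $-1$ on $(i,j,k)$) and then argue purely algebraically: from $\alpha\perp\beta$ one gets $\alpha\bar\beta\alpha=-\Norm(\alpha)\beta$, from $\alpha\perp\gamma$ one gets $\bar\alpha\gamma=-\bar\gamma\alpha$ and $\gamma\bar\alpha=-\alpha\bar\gamma$, and your computations $\bar\alpha q=\frac{\Norm(\alpha)}2(\bar\gamma\beta-\bar\beta\gamma)$ and $q\bar\alpha=\frac{\Norm(\alpha)}2(\gamma\bar\beta-\beta\bar\gamma)$ check out. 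Since $\bar\gamma\beta$ and $\gamma\bar\beta$ lie in $\ll$ and have integer scalar parts, the half-differences are their imaginary parts and lie in $\ll$, so your argument actually yields the stronger explicit statement $\alpha\times\beta\times\gamma=\pm\,\alpha\,\img(\bar\gamma\beta)=\pm\,\img(\gamma\bar\beta)\,\alpha$, with no primitivity reduction, no Lemma \ref{orto}, and no case analysis; the same identity also gives Theorem \ref{thm:vp_2mult} nearly for free, since $\ll/2\ll$ is commutative and conjugation is trivial mod $2$. The one soft spot is your proof of the identity itself: the ``degree count'' over $\RR$ is delicate, because before dividing by the irreducible determinant you must know the polynomial on the left vanishes on the whole zero set of that determinant, including where $\alpha,\beta,\gamma$ are linearly dependent. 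The clean patch is to observe that both sides are alternating trilinear maps $\HH^3\to\HH$ (the right-hand side visibly vanishes when two arguments coincide, e.g.\ $\gamma\bar\beta\beta-\beta\bar\beta\gamma=0$), so equality need only be verified on the four increasing basis triples from $\{1,i,j,k\}$, a finite check. With that patch your proof is complete and arguably shorter and more conceptual than the paper's.
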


\begin{proof} 
  Let $\alpha=a+b i+c j+d k\in\ll\setminus\{0\}$, and assume that
  $d\neq 0$ (if not, the following argument still works \emph{mutatis
    mutandis}). Then an $\QQ$-basis for $\alpha^\perp$ is given by
  $\beta_1= d-a k, \beta_2= d i-b k, \beta_3= d j-c k$. Now, simple
  computations yield:
  \begin{eqnarray*}
    \alpha\times\beta_1\times\beta_2
    &=& -\alpha\,\Im(j\alpha)\,d = -d\,\Im(\alpha j)\,\alpha,\\
    \alpha\times\beta_1\times\beta_3
    &=&  \alpha\,\Im(i \alpha) \, d = - d\, \Im(\alpha i)\,\alpha,\\
    \alpha\times\beta_2\times\beta_3
    &=& \alpha\,\Im(\alpha)\, d = d\, \Im(\alpha)\,\alpha,
  \end{eqnarray*}
  which, by the multilinearity of the vector product, proves the claim.
\end{proof}

Using corollary \ref{ijkOrtho}, one sees that, for example,
$\alpha\times \alpha i\times \alpha j\in\alpha\ll\cap\ll\alpha$. While
doing some computational experiments, we noticed that, for example,
$\alpha\times \alpha i\times \beta \in\alpha\ll$, for all
$\beta\in\ll$. This eventually led to the discovery of the next
results that connect the vector product with the multiplication of
quaternions.

\begin{thm}
  \label{vectprodN}
  For any $\alpha,\beta,\gamma,\delta\in\HH$, one has:
  \begin{align*}
    \alpha\beta\times \alpha\gamma\times \alpha\delta
    &=
      \Norm(\alpha)\,\alpha\,(\beta\times\gamma\times\delta),\\
    \beta\alpha\times \gamma\alpha \times \delta\alpha
    &= \Norm(\alpha)\,
      (\beta\times\gamma\times\delta)\,\alpha.
  \end{align*}
\end{thm}

\begin{proof}
  We will show the first equality, the proof of the second being
  entirely analogous.

  Recall (see \cite[Remark 3.3.8]{Voight}) that the determinant of the
  left regular representation $\HH\to\HH$ given by $x\mapsto \alpha x$
  is equal to $\Norm(\alpha)^2$, and note that by multilinearity and
  alternatingness it is enough to show the claimed equality for
  $\beta, \gamma, \delta\in\{1,i,j,k\}$.  In order to do that, let
  $\varepsilon_i$, $i=1,2,3,4$, be distinct units such that
  $\varepsilon_1,\varepsilon_2,\varepsilon_3 \in \{1,i,j,k\}$, and set
  $\varepsilon_4 =\varepsilon_1\times
  \varepsilon_2\times\varepsilon_3$ Then
  $\{\varepsilon_1,\varepsilon_2,\varepsilon_3 ,\varepsilon_4\}$ is an
  $\RR$ basis for $\HH$, and $\Norm(\alpha)^2$ is the value of the
  determinant whose rows are the coordinates of $\alpha\varepsilon_i$,
  $i=1,2,3,4$, which is equal to
  $(\alpha\,\varepsilon_1\times \alpha\,\varepsilon_2\times
  \alpha\,\varepsilon_3) \cdot \alpha\,\varepsilon_4$. Using
  \eqref{innerpl}, one then has
  $\Norm(\alpha)^2 = \bar{\alpha}(\alpha\,\varepsilon_1\times
  \alpha\,\varepsilon_2\times \alpha\,\varepsilon_3) \cdot
  \varepsilon_4$. Since, of course,
  $\bar{\alpha}(\alpha\,\varepsilon_1\times
  \alpha\,\varepsilon_2\times \alpha\,\varepsilon_3) \cdot
  \varepsilon_i = 0$ for $i=1,2,3$, one gets that
  $$\bar{\alpha}(\alpha\,\varepsilon_1\times
  \alpha\,\varepsilon_2\times \alpha\,\varepsilon_3) = \Norm(\alpha)^2
  \,\varepsilon_4 = \Norm(\alpha)^2\, \varepsilon_1\times
  \varepsilon_2\times\varepsilon_3,$$
  from which the result follows.
\end{proof}

\begin{rem}
  It follows immediately from the previous result that
  when $\alpha,\beta,\gamma\in\hh$ and $u\in\hh^*$, then
  \begin{align*}
    u\,(\alpha\times\beta\times\gamma)
    &= u\alpha\times u\beta\times u\gamma,\\
    (\alpha\times\beta\times\gamma)\,u
    &=\alpha u\times \beta u\times \gamma u.
  \end{align*}
\end{rem}

The main result of this section is now easy to prove.

\begin{thm}
  \label{thm:vp_2mult}
  Given $\alpha, \beta, \gamma, \delta \in\ll$, one has
  \begin{align*}
    \alpha\beta \times \alpha \gamma \times \delta
    &= \alpha\, (\beta
      \times \gamma \times \bar{\alpha}\delta) \in \alpha\ll\\
    \beta\alpha \times \gamma\alpha \times \delta
    &= (\beta \times \gamma \times \delta\bar{\alpha})\,\alpha\in
      \ll\alpha.
  \end{align*}
\end{thm}

\begin{proof}
  Again, we will just show the first claim, the proof of the second
  being entirely analogous.

  Using the previous result, one has:
  \begin{align*}    
    \alpha\beta \times \alpha \gamma \times \delta
    & = \alpha\beta \times \alpha \gamma
      \times\alpha\alpha^{-1}\delta\\
    & =\Norm(\alpha)\, \alpha\, (\beta \times \gamma
      \times \alpha^{-1}\delta)\\
    & = \alpha\, (\beta \times \gamma
      \times \bar{\alpha}\delta),
  \end{align*}
  which finishes the proof, given explicitly the respective right
  quotient.
\end{proof}

Observe that, in particular, Theorem~\ref{thm:vp_2mult} entails that
$\Norm(\alpha) \mid \Norm(\alpha\beta \times \alpha \gamma \times
\delta)$, and hence Theorem~\ref{PallTh1} implies, when
$\Norm(\alpha)$ is odd, that
$\alpha\beta \times \alpha \gamma \times \delta$, if primitive, has a
unique, up to left associates, divisor of norm equal to the norm of
$\alpha$. However, it is not true that
$\alpha\beta \times \alpha \gamma \times \delta \in \ll\alpha$. For
example, for $\alpha = 1+i+j+2k$, $\beta = j$, $\gamma = i$ and
$\delta = 1+i+j$, one has:
$$\alpha\beta \times \alpha \gamma \times \delta \in\ll\, \bar{\alpha}\,k.$$

Notice that when
$\nu = \alpha\beta \times \alpha \gamma \times \delta$ is such that
$\Re(\nu) = 0$, which is the case when $\delta\in\RR$, then the fact
that $\nu = \alpha\lambda$ obviously entails that
$\nu = -\bar{\nu} = -\bar{\lambda}\, \bar{\alpha}$.

In many examples, the right divisor of
$\alpha\beta \times \alpha \gamma \times \delta$ is \emph{comorphic}
to $\alpha$, i.e.~the absolute values of their coordinates are the
same, up to order.  However, for $\alpha = 1+i+3j+6k$, a prime of norm
$47$, one has, taking $\delta = 1+2i+5j+7k$ (a prime above $79$):
$$\alpha \times \alpha i \times \delta = (1-2i-2j-2k)\,(2+3i+5j+3k),$$
a product of two primes with the second being factor a prime above
$47$ that is not comorphic to $\alpha$.

This raises the following problem:

\medskip

\noindent{\bf Question:} Given $\beta, \gamma, \delta\in\ll$ such that
$\alpha\beta \times \alpha \gamma \times \delta$ is primitive, can one
describe the relation between $\alpha$ and the unique, up to left
associates, right divisor of the quaternion
$\alpha\beta \times \alpha \gamma \times \delta$ with norm equal
$\Norm(\alpha)$? When it is comorphic to $\alpha$?

\medskip

Using some of what was seen above, we can prove the following.
\begin{prop}
  When $\delta$ is a unity,
  $\alpha\beta \times \alpha \gamma \times \delta \in \ll \bar{\alpha}
  \delta.$
\end{prop}

\begin{proof}
  Using the second equality in the previous Theorem, together with
  \eqref{eq:vpx1} and \eqref{eq:vpconj},
  \[
    (\alpha\beta \times \alpha \gamma \times \delta)\bar{\delta} =
    \alpha\beta\bar{\delta} \times \alpha \gamma\bar{\delta} \times 1
    = \delta\bar{\beta}\bar{\alpha} \times \delta\bar{\gamma}
    \bar{\alpha} \times 1 = (\delta\bar{\beta} \times
    \delta\bar{\gamma} \times \alpha)\, \bar{\alpha},
   \]
   which yields the result when $\Norm(\delta)=1$.
\end{proof}
\section{Final remarks}

As pointed out in the introduction, the results presented here where
obtained while musing on a possible extension to integral quaternions
of the method of factoring an integer from two of its representations
as a sum of two squares. Some results of G. Pall, contained in
\cite{Pal}, led us to look for integral quaternions that are
orthogonal to a given one. To construct these, we turned our attention
to the vector product, just to find out that this did not yield what
we where looking for. Nevertheless, we obtained results that seem
interesting in their own way, and that led to a question that seems
worth pondering about.

\subsection*{Acknowledgements}

The research of the first author was partially supported by CMUP ({\em
  Centro de Matem\'atica da Universidade do Porto}), which is
financed by national funds through FCT ({\em Funda\c c\~ao para a
  Ci\^encia e a Tecnologia}), I.P., under the project with reference
UIDB/00144/2021. The research of the second author was partially
financed by Portuguese Funds through FCT within the Projects
UIDB/00013/2020 and UIDP/00013/2020.



\end{document}